\def\qed{\hfill {\hbox{${\vcenter{\vbox{               %HOLLOW SQUARE
   \hrule height 0.4pt\hbox{\vrule width 0.4pt height 6pt
   \kern5pt\vrule width 0.4pt}\hrule height 0.4pt}}}$}}}
\def\bar{\overline}
\newtheorem{theorem}{Theorem}
\newtheorem{definition}{Definition}
\newtheorem{lemma}[theorem]{Lemma}
\newtheorem{proposition}[theorem]{Proposition}
\newtheorem{corollary}[theorem]{Corollary}
\newtheorem{example}{Example}
\newtheorem{remark}{Remark}
\newenvironment{proof}[1][Proof]{\smallskip\noindent{\bf #1.}\quad}%
{\qed\par\medskip}
\date{}
\title{\Large \textbf{Link invariants from finite biracks}
}
\author{Sam Nelson}
\begin{document}
\maketitle

\begin{abstract}
A \textit{birack} is an algebraic structure with axioms encoding
the blackboard-framed Reidemeister moves, incorporating quandles, 
racks, strong biquandles and semiquandles as special cases. In this 
paper we extend the counting invariant for finite racks to the case 
of finite biracks. We introduce a family of biracks generalizing 
Alexander quandles, $(t,s)$-racks, Alexander biquandles and 
Silver-Williams switches, known as $(\tau,\sigma,\rho)$-biracks. 
We consider enhancements of the counting invariant using writhe 
vectors, image subbiracks, and birack polynomials.
\end{abstract}

\medskip

\quad
\parbox{5in}{
\textsc{Keywords:} Biracks, biquandles, Yang-Baxter equation, virtual knot 
invariants, enhancements of counting invariants
\smallskip

\textsc{2000 MSC:} 57M27, 57M25
}

\section{\large\textbf{Introduction}}

The modern study of algebraic structures with axioms corresponding to 
Reidemeister moves on knot diagrams goes back at least to the early 1980s 
with the more or less simultaneous work of Joyce \cite{J} and 
Matveev \cite{M}, drawing in some cases on previous work by
Takasaki \cite{T} and Conway and Wraith \cite{CW}. 

Assigning algebraic generators to the arcs in a knot diagram and interpreting 
crossings as operations paves the way for translating Reidemeister moves into 
algebraic axioms. 
Applying this formula to unoriented link diagrams yields the \textit{involutory 
quandle} or \textit{kei} structure; generalizing to oriented link diagrams
gives us the \textit{quandle} or \textit{distributive groupoid} structure.
Generalizing again to blackboard-framed diagrams as in \cite{FR} gives
us the \textit{rack} or \textit{automorphic set} structure.

A further generalization replaces \textit{arcs}, i.e., portions of the knot 
diagram from one undercrossing point to another, with \textit{semiarcs}, i.e.,
portions of the knot diagram from one over- or under-crossing point to the 
next. Semiarcs are edges in the graph obtained from a link diagram by 
replacing crossing points with vertices.
Previous work has been done on the semiarc-generated algebraic structures
arising from unframed oriented link diagrams, known as \textit{biquandles}
\cite{BF,FJK,KR,NV}. A special case of biquandles, applicable to \textit{flat 
virtual links} or \textit{virtual strings} and known as \textit{semiquandles},
is examined in \cite{HN} with further examples appearing in \cite{FT}.

In this paper we extend the method introduced in \cite{N} of obtaining
invariants of unframed classical and virtual knots and links from finite
racks to the case of 
\textit{biracks}, the algebraic structure generated by semiarcs
in a link diagram with axioms corresponding to blackboard-framed isotopy,
first introduced in \cite{FRS}.
The new family of invariants contains the quandle, rack and strong biquandle 
counting
invariants as special cases. In particular, the fundamental 
birack determines the knot quandle and fundamental rack and hence is 
complete invariant up to ambient homeomorphism for oriented framed knots 
and unsplit links; see \cite{FR, J, M}. 

%For practical computability, we define link invariants 
%using homomorphisms from the fundamental birack into finite
%biracks. 

The paper is organized as follows. In section \ref{S2} we define
biracks and give a few examples. In section \ref{S3} we  
describe a family of biracks containing Alexander quandles, 
Alexander biquandles, $(t,s)$-racks and Silver-Williams switches
as special cases. In section \ref{S4} we define a counting invariant of
unframed classical knots and links using finite biracks and
define enhancements of the birack 
counting invariants using image subbiracks, framing vectors and birack 
polynomials. We end with some questions for future research.

\section{\large\textbf{Birack basics}}\label{S2}

Let $X$ be a set.

\begin{definition}
\textup{We will say a map $B:X\times X\to X\times X$ is \textit{strongly 
invertible} provided $B$ satisfies the following three conditions:}
\begin{itemize}
\item \textup{$B$ is invertible, i.e there exists a map 
$B^{-1}:X\times X\to X\times X$
satisfying $B\circ B^{-1}=\mathrm{Id}_{X\times X}=B^{-1}\circ B$,}
\item \textup{$B$ is \textit{sideways invertible}, i.e there exists a unique 
invertible map $S:X\times X\to X\times X$ satisfying}
\[S(B_1(x,y),x)=(B_2(x,y),y),\]
\textup{for all $x,y\in X$, and}
\item \textup{The sideways maps $S$ and $S^{-1}$ are} \textit{diagonally 
bijective}, \textup{i.e. the compositions $  S_1^{\pm 1}\circ\Delta$, 
$S_2^{\pm 1}\circ\Delta$ of the components of $S$ and $S^{-1}$ with 
the map $\Delta:X\to X\times X$ defined by $\Delta(x)=(x,x)$ are bijections.} 
\end{itemize}
\end{definition}

\begin{definition}
\textup{A \textit{birack} $(X,B)$ is a set $X$ with a strongly
invertible map $B:X\times X\to X\times X$ which satisfies the
\textit{set-theoretic Yang-Baxter equation}}
\[(B\times \mathrm{Id})\circ(\mathrm{Id}\times B)\circ(B\times \mathrm{Id})=
(\mathrm{Id}\times B)\circ(B\times \mathrm{Id})\circ(\mathrm{Id}\times B)\]
\end{definition}

In previous work such as \cite{CN,KR,NV}, the components of $B$ were
interpreted as binary operations or right-actions of the set $X$ on 
itself, with $B_1(x,y)$ denoted by $y^x$, $B_2(x,y)=x_y$, 
$B_1^{-1}(x,y)=y_{\overline{x}}$ and $B_2^{-1}(x,y)=x^{\overline{y}}$.
We can write the Yang-Baxter requirement as three equations in the component 
maps:
\begin{eqnarray}
B_1(x,B_1(y,z))               & = & B_1(B_1(x,y),B_1(B_2(x,y),z)) \\
B_1(B_2(x,B_1(y,z)),B_2(y,z)) & = & B_2(B_1(x,y),B_1(B_2(x,y),z))\\
B_2(B_2(x,B_1(y,z)),B_2(y,z)) & = & B_2(B_2(x,y),z)
\end{eqnarray}
or in the notation of \cite{KR},
\[(z^y)^x=(z^{x_y})^{y^x},\quad (y_z)^{x_{z^y}}=(y^x)_{z^{x_y}},\quad 
\mathrm{and} \quad (x_{z^y})_{y_z}=(x_y)_z.\]

Recall that a \textit{blackboard-framed link} is an equivalence class
of link diagrams under the equivalence relation generated by the three 
\textit{blackboard-framed Reidemeister moves}, traditionally numbered 
according to the number of strands involved in the move:
\[
\begin{array}{c}
\includegraphics{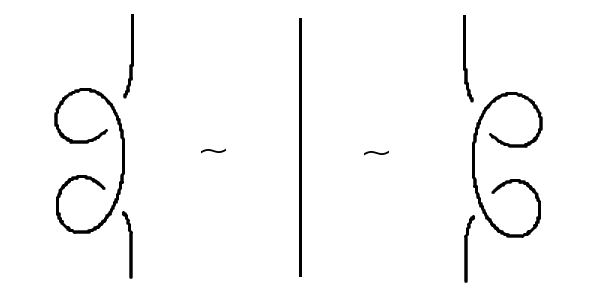} \\
\mathrm{type \ I} \end{array}
\quad \begin{array}{c}
\includegraphics{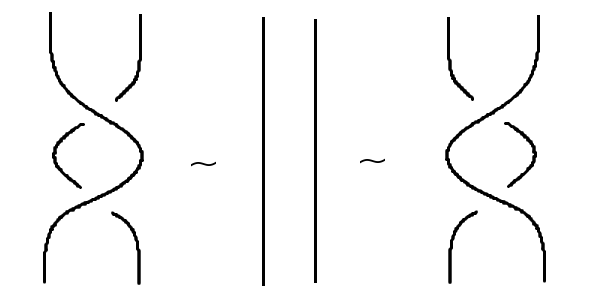}\quad \\
\mathrm{type \ II} \end{array}
\begin{array}{c}
\includegraphics{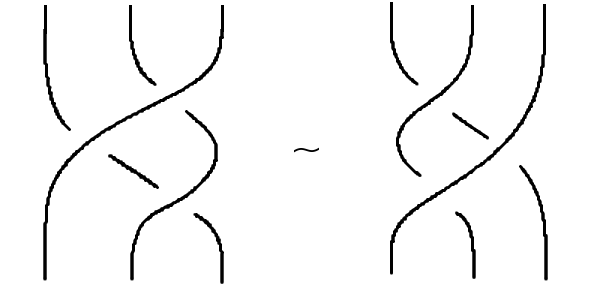}\quad \\
\mathrm{type \ III} \end{array}
\]

The birack axioms are chosen such that any labeling of the semiarcs
in an oriented blackboard-framed link diagram with elements of $X$ satisfying 
the identifications
\[\includegraphics{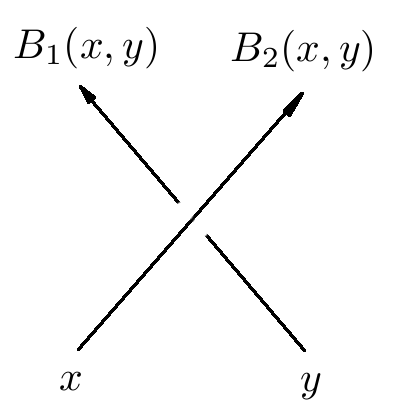}\quad \includegraphics{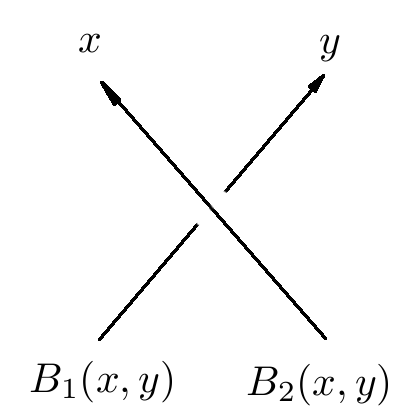}\]
corresponds to a unique such labeling after applying 
any of the blackboard-framed Reidemeister moves. Thus, the number of such
labelings is an easily computable invariant of blackboard-framed isotopy.

The condition that $B$ is a solution to the set-theoretic Yang-Baxter equation
is equivalent to the condition that labelings are preserved by the type III 
move with Cartesian product $\times$ indicating horizontal stacking and 
composition $\circ$ indicating vertical stacking.

\[\includegraphics{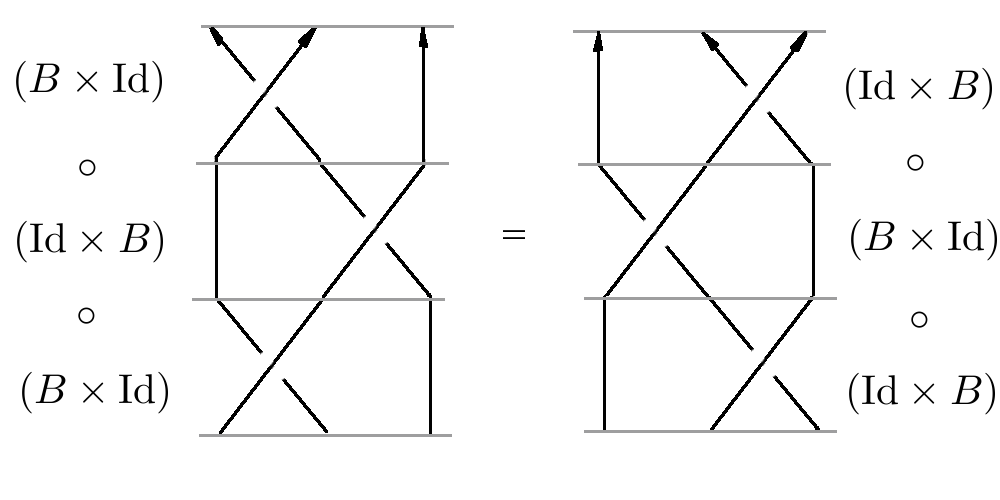}\]

Interpreting $B^{-1}$ as the map defined at a negative crossing satisfies
the \textit{direct} Reidemeister II moves, i.e. the two two-strand moves
where both strands are oriented in the same direction.

\[\includegraphics{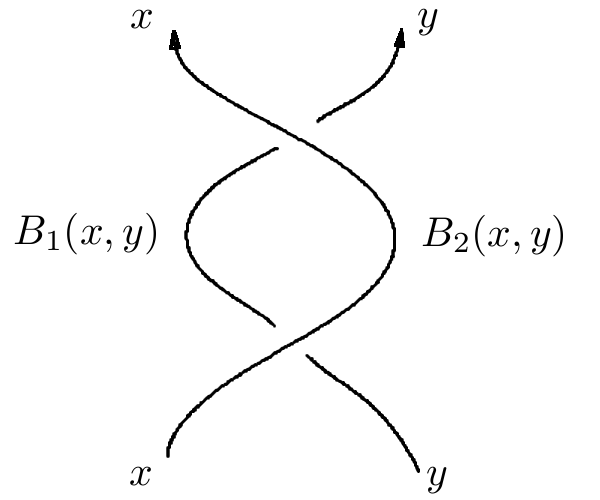}\quad \includegraphics{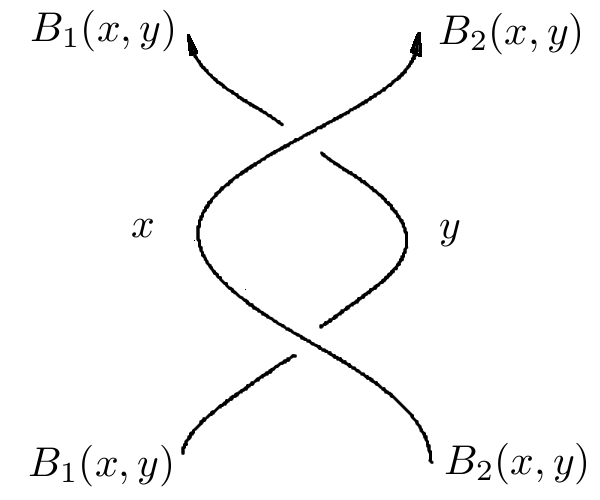}\]

The sideways invertibility requirement is needed to guarantee the existence 
and uniqueness of the labels for the middle semiarcs in the \textit{reverse} 
type II moves, where the strands are oriented in opposite directions:
\[\includegraphics{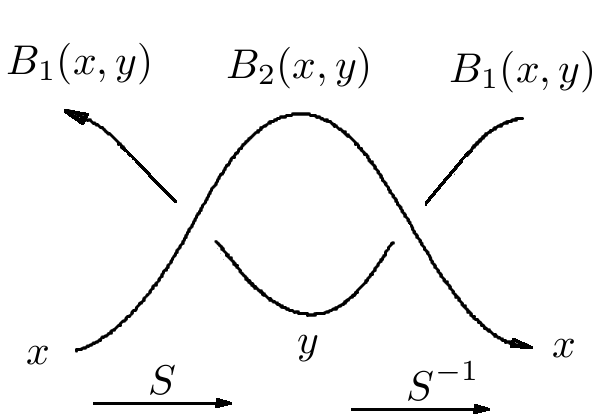}\quad \includegraphics{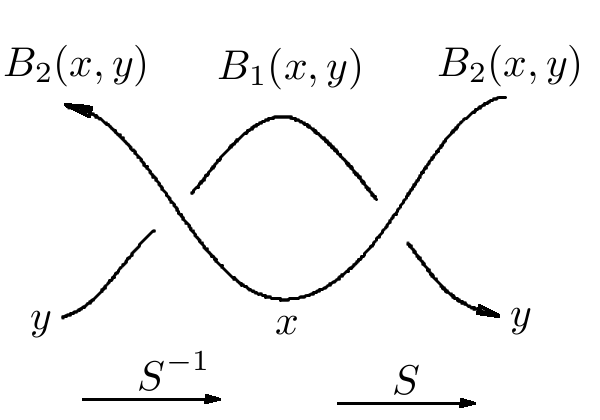}\]

\begin{remark}\textup{
The existence, uniqueness and invertibility of the sideways map $S$ are
equivalent to the
requirement that the component maps $B_1$ and $B_2$ of $B$ are left-
and right-invertible respectively; this condition is sometimes called
the \textit{strong birack} condition. Not imposing this requirement allows
single labelings before a crossing-introducing reverse type II to
branch into multiple labelings after the move, a situation we must avoid
if we want well-defined counting invariants.}
\end{remark}

\begin{remark}\textup{
While there are eight total oriented Reidemeister III moves, the other seven 
moves follow from the pictured III move and direct and reverse II moves.
Thus, the various identities required by the other type III moves are 
satisfied by a birack.}
\end{remark}

The diagonal invertibility condition is required in order to guarantee 
the existence and uniqueness of the labels in the blackboard-framed type I
moves. Of particular importance are the bijections 
$\alpha=(S_2^{-1}\circ\Delta)^{-1}$
and $\pi=S_1^{-1}\circ\Delta\circ \alpha$; these give the labels on a strand
after a framed type I move as pictured.

\[\raisebox{-0.7in}{\includegraphics{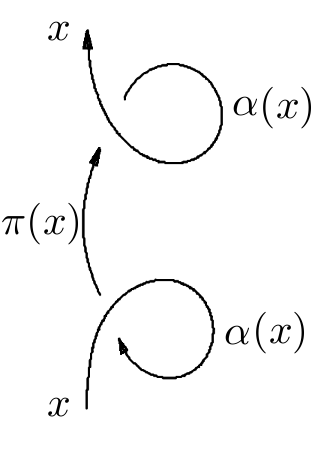}} \quad 
\begin{array}{rcl}
S(\pi(x),x)=(\alpha(x),\alpha(x))
\end{array}\]

In fact, \textit{a priori} we have two potentially distinct bijections 
$\pi$ and $\phi$ coming from the two oriented double-I moves. It turns 
out that the birack axioms imply that the two kink maps coincide:

\begin{proposition}
\textup{Let $(X,B)$ be a birack and $\phi,\pi:X\to X$ be the maps 
$\pi=S_1^{-1}\circ\Delta\circ (S_2^{-1}\circ\Delta)^{-1}$ and 
$\phi=S_1\circ\Delta\circ(S_2\circ\Delta)^{-1}$
as pictured.
Then $\phi=\pi$.}
\[\includegraphics{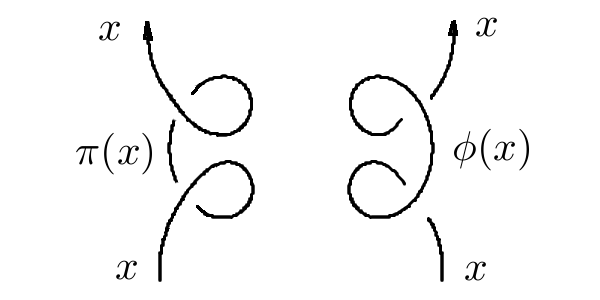}\]
\end{proposition}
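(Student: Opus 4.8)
The plan is to pass from the definitions of the kink maps, which are phrased in terms of the sideways map $S$, to identities among the component maps $B_1,B_2$, and then to extract $\phi=\pi$ from the three component forms of the Yang--Baxter equation evaluated at a single well-chosen triple. First I would rewrite the definitions. Unwinding $\pi=S_1^{-1}\circ\Delta\circ(S_2^{-1}\circ\Delta)^{-1}$ shows that $\alpha(w)$ and $\pi(w)$ are characterized by $S(\pi(w),w)=(\alpha(w),\alpha(w))$, and comparing this with the defining relation $S(B_1(x,y),x)=(B_2(x,y),y)$ of the sideways map (the bijectivity of $y\mapsto B_1(x,y)$ guaranteed by the strong-birack condition makes the comparison unambiguous) yields
\[B(w,\alpha(w))=(\pi(w),\alpha(w)),\qquad\text{that is,}\qquad \pi(w)=B_1(w,\alpha(w)),\quad B_2(w,\alpha(w))=\alpha(w).\]
In the same way $\phi=S_1\circ\Delta\circ(S_2\circ\Delta)^{-1}$ is characterized by $S(\beta(w),\beta(w))=(\phi(w),w)$, which translates to
\[B(\beta(w),w)=(\beta(w),\phi(w)),\qquad\text{that is,}\qquad \phi(w)=B_2(\beta(w),w),\quad B_1(\beta(w),w)=\beta(w).\]
So $B$ fixes the second coordinate of $(w,\alpha(w))$ and the first coordinate of $(\beta(w),w)$; these two fixed-coordinate identities are the algebraic content of the two framed type I pictures.

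The key step is to feed the triple $(\beta(w),w,\alpha(w))$ into the Yang--Baxter equation, chosen so that \emph{both} fixed-coordinate identities are available at once. Substituting $x=\beta(w)$, $y=w$, $z=\alpha(w)$ into the three component equations and simplifying with $B_1(\beta(w),w)=\beta(w)$, $B_2(\beta(w),w)=\phi(w)$, $B_1(w,\alpha(w))=\pi(w)$, $B_2(w,\alpha(w))=\alpha(w)$, the first component equation collapses to
\[B_1(\beta(w),\pi(w))=B_1\bigl(\beta(w),B_1(\phi(w),\alpha(w))\bigr)\]
and the third to
\[B_2\bigl(B_2(\beta(w),\pi(w)),\alpha(w)\bigr)=B_2(\phi(w),\alpha(w)).\]
Cancelling the injective outer map $B_1(\beta(w),-)$ in the first and the injective outer map $B_2(-,\alpha(w))$ in the third --- these are precisely the left- and right-invertibility of $B_1$ and $B_2$ from the strong-birack condition --- produces the two auxiliary identities
\[\pi(w)=B_1(\phi(w),\alpha(w))\qquad\text{and}\qquad \phi(w)=B_2(\beta(w),\pi(w)).\]

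Finally I would substitute these two identities into the second component equation. After the same substitution that equation reads
\[B_1\bigl(B_2(\beta(w),\pi(w)),\alpha(w)\bigr)=B_2\bigl(\beta(w),B_1(\phi(w),\alpha(w))\bigr);\]
replacing $B_2(\beta(w),\pi(w))$ by $\phi(w)$ and $B_1(\phi(w),\alpha(w))$ by $\pi(w)$ turns the left side into $B_1(\phi(w),\alpha(w))=\pi(w)$ and the right side into $B_2(\beta(w),\pi(w))=\phi(w)$, so the equation becomes $\pi(w)=\phi(w)$, which is the claim. I expect the only real obstacle to be bookkeeping in the translation step: one must invoke the correct strong-birack bijection when moving between $S$ and $B$, and must check that the triple $(\beta(w),w,\alpha(w))$ genuinely activates both fixed-coordinate relations so that the two outer cancellations are justified; once that is in place the three component equations do all the work.
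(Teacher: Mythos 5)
Your proof is correct, and it takes a genuinely different route from the paper's. I verified the translation of the kink maps into the fixed-coordinate identities $B(w,\alpha(w))=(\pi(w),\alpha(w))$ and $B(\beta(w),w)=(\beta(w),\phi(w))$, the two cancellations in the first and third Yang--Baxter components (both licensed, as you say, by the left- and right-invertibility of $B_1$ and $B_2$ that the sideways-invertibility axiom encodes), and the final substitution into the middle component; there is no gap. The paper instead argues diagrammatically in two sentences: since $B$ satisfies the Yang--Baxter equation and $S$ the type II requirements, birack labelings are preserved under regular isotopy, and the Fenn--Rourke fact that opposite-writhe, opposite-winding-number kinks cancel using only type II and III moves then forces $\phi^{-1}(\pi(x))=x$, hence $\phi=\pi$. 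Your computation is essentially the algebraic shadow of that picture: the triple $(\beta(w),w,\alpha(w))$ is the labeling of the two-kink tangle, and the three component equations evaluated at that triple are exactly the labeling conditions at the type III move in the cancellation. What the paper's route buys is brevity and a conceptual explanation (kink cancellation under regular isotopy), at the cost of importing an external topological lemma together with the already-established dictionary between moves and axioms; what your route buys is a self-contained verification inside the axioms that makes explicit precisely which hypotheses do the work --- diagonal bijectivity to define $\alpha$ and $\beta$, strong invertibility for the two cancellations, and the second Yang--Baxter component for the final identity $\pi(w)=\phi(w)$ --- and that would transfer verbatim to a purely algebraic treatment of biracks in which the diagrammatic correspondence has not been set up.
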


\begin{proof}
Since $(X,B)$ is birack, $B$ satisfies the set-theoretic Yang-Baxter 
equation and $S$ satisfies the invertibility requirements for type II moves.
Thus, labelings of knot diagrams by elements of $X$ satisfying the
birack labeling condition are preserved under regular isotopy. Then 
the fact that (see \cite{FR} for instance) 
opposite-writhe opposite-winding-number twists can be canceled using only 
type II and type III moves implies that $\phi^{-1}(\pi(x))=x$ for all $x\in X$,
and we have $\phi=\pi$. 
\[ \includegraphics{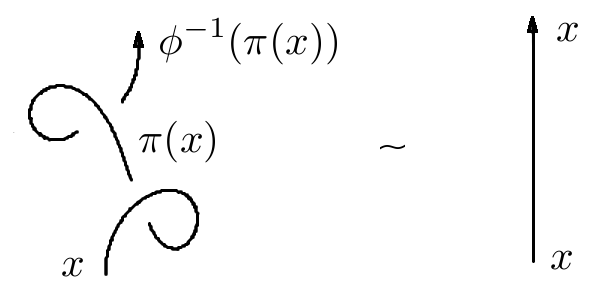}\]
\end{proof}

\begin{definition}
\textup{Let (X,B) be a birack. Then the bijection $\pi:X\to X$ 
given by $\pi=S_1^{-1}\circ\Delta\circ (S_2^{-1}\circ\Delta)^{-1}$
is the \textit{kink map} of $(X,B)$.
$\pi(x)$ represents  ``going through a positive kink'' while 
$\pi^{-1}$ represents ``going through a negative 
kink''.}
\end{definition}

Next we have a few basic definitions relating to the algebra of 
biracks.

\begin{definition}
\textup{Let $Y$ be a subset of $X$. Then $Y$ is a \textit{subbirack} 
of $(X,B)$ if the images of the restrictions of the components of $B$ and 
$S$ to $Y\times Y$ are contained in $Y$.}
\end{definition}

\begin{definition}
\textup{Let $(X,B)$ and $(X',B')$ be biracks. Then a map
$h:X\to X'$ is a \textit{birack homomorphism} if for all 
$x,y\in X$ we have}
\[h(B_1(x,y))=B_1'(h(x),h(y)) \quad \mathrm{and}\quad 
h(B_2(x,y))=B_2'(h(x),h(y)).\]
\textup{For any birack homomorphism $h:X\to X'$ the \textit{image} of $h$,
denoted $\mathrm{Im}(h)$, is the subbirack of $X'$ generated by the elements
$h(x)$ for $x\in X$.}
\end{definition}

\begin{definition}
\textup{Let $(X,B)$ be a birack and let $\pi:X\to X$ be the
kink map. The \textit{birack rank} of $(X,B)$, denoted $N(X,B)$ or just 
$N$ when $(X,B)$ is clear from context, is the smallest positive integer
$N$ such that $\pi^N(x)=x$ for all $x\in X$. In particular, if $X$ is a 
finite set, then $N$ is the exponent of $\pi$
considered as an element of the symmetric group $S_X$.}
\end{definition}

If $N(X,B)<\infty$ then there is a bijection between the sets of labelings
of any two oriented blackboard-framed link diagrams $D$ and $D'$ which are 
related by blackboard framed moves together with the
\textit{$N$-phone cord move}:
\[\includegraphics{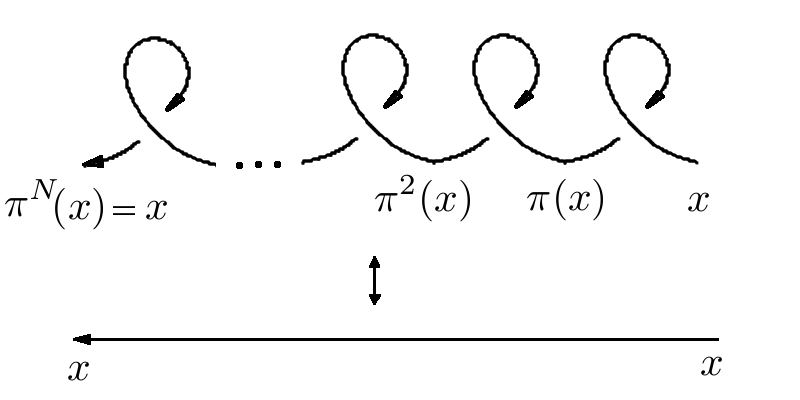}\]

\begin{example}
\textup{Let $X$ be any set, and let $\rho,\tau:X\to X$ be bijections.
Then $B(x,y)=(\tau(y),\rho(x))$ defines a birack provided
$\rho$ and $\tau$ commute. The reader is invited to verify that the
Yang-Baxter equation components become}
\[ \rho^2(x)=\rho^2(x),\quad \tau \rho(y)=\rho\tau(y),\quad 
\mathrm{and} \quad \tau^2(z)=\tau^2(z).\]
\textup{Moreover, we have $B^{-1}(x,y)=(\rho^{-1}(y),\tau^{-1}(x))$,
$S(u,v)=(\rho(v),\tau^{-1}(u))$ with $S^{-1}(u,v)=(\tau(v),\rho^{-1}(u))$,
$f(x)=\tau^{-1}(x)$ and $g(x)=\rho(x)$. We call a 
birack of this type a \textit{constant action birack}. The kink 
map is $\pi(x)=\rho\tau^{-1}(x)$, so $N(X,B)$ is the order of the permutation 
$\rho\tau^{-1}$ in $S_{|X|}$. In particular, a constant action birack is a 
strong biquandle iff $\rho=\tau$.}
\end{example}

\begin{example}
\textup{Any oriented blackboard-framed link diagram 
$L=L_1\cup \dots \cup L_c$ has a \textit{fundamental birack} 
denoted $BR(L)$ (or $BR(L,(w_1,\dots, w_c))$ if we wish to explicitly 
specify the writhe numbers of the components). Let $G=\{g_1,\dots, g_n\}$ 
correspond to the semiarcs of $L$. Define the set $BW(G)$ of 
\textit{birack words} recursively by $x\in BW(G)$ iff $x\in G$, 
$x=B^{\pm 1}_{i}(y,z)$, $x=S^{\pm 1}_{i}(y,z)$, $x=f^{\pm 1}(y)$ or 
$x=g^{\pm 1}(y)$ where $y,z\in BW(G)$ and $i\in\{1,2\}$. Then the
\textit{free birack} on $G$ is the set of equivalence classes
of $BW(G)$ under the equivalence relation generated by the various equations
coming from the Yang-Baxter equation and the strong invertibility requirements
(e.g., $g(x)\sim S_1(x,x)$, etc.). The fundamental birack $BR(L)$
of the link $L$ is then the set of equivalence classes of free blackboard 
birack elements under the additional equivalence relation 
generated by the crossing relations in $L$. We will generally express the 
birack of $G$ by $BR(L)=\langle G \ |\ R\rangle$ where $G$ is
the set of arc labels and $R$ is the set of crossing relations, e.g.}
\[\raisebox{-0.5in}{\includegraphics{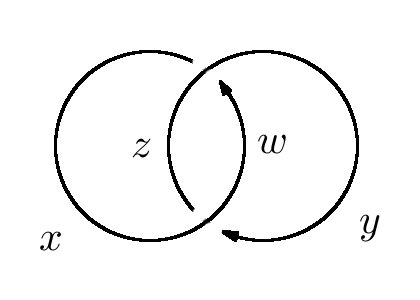}} \quad 
BR(L)=\langle x,y,z,w \ |\ B(x,y)=(z,w),
B(z,w)=(x,y)\rangle.\]
\textup{The quotient of $BR(L)$ obtained by setting $B_2(x,y)$ equal to 
$x$ in all relations is the \textit{fundamental rack} $FR(L)$ of the
blackboard-framed link $L$; the quotient of $BR(L)$ obtained by
setting $\pi(x)=x$ for all $x\in BR(L)$ is the \textit{fundamental (strong)
biquandle} of $L$. Imposing both conditions yields the \textit{knot quandle}.
In particular, since $BR(L)$ determines the fundamental rack of $L$, the 
isomorphism class of $BR(L)$ is a \textit{complete} invariant in the same 
sense as the fundamental rack and the knot quandle; see \cite{FR,J,M}.}
\end{example}

\begin{remark}
\textup{The cardinality of the set of labelings of a blackboard-framed link 
diagram $L$ by a birack $(X,B)$ satisfying the labeling condition 
above is an invariant of blackboard framed isotopy we call the \textit{basic 
counting invariant}, denoted 
\[|\mathrm{Hom}(BR(L),(X,B))|=|\{h\ |\ h:BR(L)\to X \ \mathrm{birack 
\ homomorphism}\}|.\] 
Indeed, each labeling satisfying the crossing condition
determines a unique homomorphism $f:BR(L)\to X$ and every birack
homomorphism corresponds to a unique labeling of $L$.}
\end{remark}

\begin{example}
\textup{Many previously studied algebraic structures are special cases of
biracks.}
\begin{list}{$\bullet$}{}
\item \textup{A birack with $\pi=\mathrm{Id}:X\to X$ is a 
\textit{(strong) biquandle} (see \cite{KR,NV,FJK});}
\item \textup{A birack with $\pi=\mathrm{Id}$ and $B^{-1}=B$
is a \textit{semiquandle} (see \cite{HN});}
\item \textup{A birack with $B_2(x,y)=x$ is a \textit{rack}
(see \cite{FR,N});}
\item \textup{A birack with both $B_2(x,y)=x$ and $\pi=\mathrm{Id}$
is a \textit{quandle} (see \cite{J}).}
\end{list}
\end{example}

To define birack structures on a finite set $X=\{x_1,\dots, x_n\}$ 
without algebraic formulas for the component maps $B_i$, we can give a pair of
$n\times n$ matrices with entries in $\{1,2\dots, n\}$ encoding the component
maps of $B$. Specifically, the \textit{birack matrix} of a birack
$X$ is the block matrix $M_B=[B_1 | B_2]$ such that the entries in row $i$ 
column $j$ of $B_1$ and $B_2$ respectively are $x_k$ and $x_h$ where 
$B_1(x_i,x_j)=x_k$ and $B_2(x_j,x_i)=x_h$. Note the transposition of the
input variables for $B_2$; this is for backwards compatibility with notation 
from previous work. Comparing the notation from
\cite{NV}, our $B_1$ is the upper right block matrix and our $B_2$ is the 
lower right block matrix. 

\begin{example}\label{cab1}
\textup{As an easy example, let $X=\{1,2,3,4\}$ and let $\tau=(12)$ and
$\rho=(34)$. Then $\rho\tau=\tau\rho$ and we have a birack
with matrix}
\[M_B=\left[\begin{array}{cccc|cccc}
2 & 2 & 2 & 2 & 1 & 1 & 1 & 1 \\ 
1 & 1 & 1 & 1 & 2 & 2 & 2 & 2 \\
3 & 3 & 3 & 3 & 4 & 4 & 4 & 4 \\
4 & 4 & 4 & 4 & 3 & 3 & 3 & 3 \\
\end{array}\right].\]
\textup{The kink map here is $\pi=(12)(34)$, and thus we have 
birack rank $N=2$.}
\end{example}

\section{\large\textbf{$(t,s,r)$-biracks and $(\tau,\sigma,\rho)$-biracks}}
\label{S3}

In this section we define two classes of biracks,
$(t,s,r)$-biracks and $(\tau,\sigma,\rho)$-biracks. We begin
with the simpler case. Let $\tilde\Lambda=\mathbb{Z}[t^{\pm 1}, s,r^{\pm 1}]/I$ 
where $I$ is the ideal generated by $s^2-(1-tr)s$. Let $X$ be any $\tilde\Lambda$ module.

\begin{lemma}
The element $\pi=tr+s\in\tilde\Lambda$ is invertible with inverse given by
$\pi^{-1}=t^{-1}r^{-1}(1-s)$.
\end{lemma}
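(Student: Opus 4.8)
The plan is simply to verify the claimed formula by direct computation in the quotient ring $\tilde\Lambda$. Since $\tilde\Lambda$ is commutative, it suffices to check that the product of $\pi=tr+s$ with the proposed inverse $t^{-1}r^{-1}(1-s)$ equals $1$ in one order; equality in the other order then follows automatically. Because $t$ and $r$ are units in $\tilde\Lambda$ by construction, the factor $t^{-1}r^{-1}$ can be pulled out in front, so the whole problem reduces to showing that $(tr+s)(1-s)=tr$ modulo $I$.

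First I would expand this product as $(tr+s)(1-s)=tr-trs+s-s^2$. The one nonroutine step is then to invoke the defining relation of the ideal $I$, namely $s^2-(1-tr)s\in I$, which gives $s^2=(1-tr)s=s-trs$ in $\tilde\Lambda$. Substituting this for the $-s^2$ term, the expression becomes $tr-trs+s-(s-trs)=tr-trs+s-s+trs$, and the four $s$-bearing terms cancel in pairs, leaving exactly $tr$. Multiplying back through by $t^{-1}r^{-1}$ then yields $1$, establishing that $\pi\cdot t^{-1}r^{-1}(1-s)=1$ and hence that $\pi$ is invertible with the stated inverse.

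There is no genuine obstacle here beyond careful bookkeeping. The only point requiring attention is to remember that we are working in the quotient $\tilde\Lambda$ and must reduce $s^2$ using the ideal relation rather than treating $s$ as a free polynomial variable; it is precisely this relation that produces the cancellation making the product collapse to $tr$. Everything else is a routine manipulation of units in a commutative ring.
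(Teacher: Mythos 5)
Your proposal is correct and matches the paper's own proof in essence: both verify the identity by direct multiplication and a single application of the relation $s^2=(1-tr)s$, with commutativity disposing of the other order. Your only cosmetic difference is factoring out the unit $t^{-1}r^{-1}$ before expanding, whereas the paper carries it through the computation; the substance is identical.
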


\begin{proof}
\begin{eqnarray*}
(tr+s)t^{-1}r^{-1}(1-s) & = & (tr+s)(t^{-1}r^{-1}-t^{-1}r^{-1}s) \\
 & = & 1+t^{-1}r^{-1}s-s-t^{-1}r^{-1}s^2 \\
 & = & 1+t^{-1}r^{-1}s-s-t^{-1}r^{-1}(1-tr)s \\
 & = & 1+(t^{-1}r^{-1}-1)s-t^{-1}r^{-1}(1-tr)s \\
 & = & 1+(t^{-1}r^{-1}-1)s-(t^{-1}r^{-1}-1)s \\
 & = & 1.
\end{eqnarray*}
\end{proof}

\begin{corollary}
The element $(1-s)\in \tilde\Lambda$ is invertible with inverse
$(1-s)^{-1}=1+t^{-1}r^{-1}s$.
\end{corollary}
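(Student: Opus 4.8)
The plan is to deduce this directly from the Lemma rather than recompute the product from scratch, exploiting the fact that $\tilde\Lambda$ is a commutative ring in which $t$ and $r$ are units. The Lemma establishes the identity $(tr+s)\,t^{-1}r^{-1}(1-s)=1$. Since every factor here lies in the commutative ring $\tilde\Lambda$, I am free to regroup the three factors in any order; writing the product as $(1-s)\bigl(t^{-1}r^{-1}(tr+s)\bigr)=1$ exhibits $t^{-1}r^{-1}(tr+s)$ as a right, hence two-sided, inverse of $(1-s)$.

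The remaining step is to simplify this candidate inverse into the claimed form. Distributing gives $t^{-1}r^{-1}(tr+s)=t^{-1}r^{-1}\cdot tr+t^{-1}r^{-1}s=1+t^{-1}r^{-1}s$, which is exactly the expression in the statement. Thus $(1-s)(1+t^{-1}r^{-1}s)=1$ and the Corollary follows immediately.

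Alternatively, one can verify the identity $(1-s)(1+t^{-1}r^{-1}s)=1$ head-on: expanding yields $1+t^{-1}r^{-1}s-s-t^{-1}r^{-1}s^2$, and substituting the defining relation $s^2=(1-tr)s$ of the ideal $I$ collapses the last three terms to $0$, leaving $1$. This is essentially the computation appearing in the proof of the Lemma, read from a different grouping of the factors, so the two routes are really the same calculation.

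I expect no genuine obstacle here: the statement is a one-line consequence of the Lemma once commutativity is invoked. The only points demanding care are remembering that the ring relation to apply is $s^2=(1-tr)s$, so that the term $t^{-1}r^{-1}s^2$ reduces correctly, and observing that $t^{-1}$ and $r^{-1}$ are bona fide elements of $\tilde\Lambda$, which is what legitimizes treating $t^{-1}r^{-1}(tr+s)$ as an honest ring element and performing the regrouping.
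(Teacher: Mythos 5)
Your proof is correct and takes essentially the same route the paper intends: the paper offers no separate proof of the Corollary precisely because it follows from the Lemma's identity $(tr+s)t^{-1}r^{-1}(1-s)=1$ by commutativity and the regrouping $t^{-1}r^{-1}(tr+s)=1+t^{-1}r^{-1}s$, which is exactly your argument (and your direct expansion using $s^2=(1-tr)s$ is, as you note, the same computation as in the Lemma's proof).
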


\begin{proposition}
Let $X$ be a $\tilde\Lambda$-module and define $B(x,y)=(ty+sx,rx)$.
Then $(X,B)$ is a birack with kink map $\pi(x)=(tr+s)x$.
\end{proposition}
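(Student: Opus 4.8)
The plan is to verify the two defining conditions of a birack directly from the linear formula $B(x,y)=(ty+sx,rx)$, treating every map as a $\tilde\Lambda$-module homomorphism so that each axiom reduces to an identity among $t,s,r\in\tilde\Lambda$. Writing $B_1(x,y)=ty+sx$ and $B_2(x,y)=rx$, I would first establish strong invertibility. Since $t$ and $r$ are units, solving $B(x,y)=(u,v)$ gives $x=r^{-1}v$ and $y=t^{-1}(u-sr^{-1}v)$, so $B^{-1}(u,v)=(t^{-1}u-t^{-1}sr^{-1}v,\ r^{-1}v)$ exists. For the sideways map I would solve $S(B_1(x,y),x)=(B_2(x,y),y)$: setting $(a,b)=(ty+sx,x)$ forces $x=b$ and $y=t^{-1}(a-sb)$, yielding the unique formula $S(a,b)=(rb,\ t^{-1}a-t^{-1}sb)$, which is invertible with $S^{-1}(p,q)=(tq+sr^{-1}p,\ r^{-1}p)$, again because $t,r$ are units. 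Existence on all of $X\times X$, uniqueness, and invertibility of $S$ all fall out of this single computation.

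Next I would check diagonal bijectivity, which is exactly where the preceding Lemma and Corollary enter. Composing with $\Delta$ gives $S_1\circ\Delta(x)=rx$ and $S_2^{-1}\circ\Delta(x)=r^{-1}x$, which are bijections since $r$ is a unit; $S_2\circ\Delta(x)=t^{-1}(1-s)x$ is a bijection by the Corollary (invertibility of $1-s$); and $S_1^{-1}\circ\Delta(x)=(t+sr^{-1})x=r^{-1}(tr+s)x$ is a bijection by the Lemma (invertibility of $tr+s$). This simultaneously sets up the kink-map computation: since $(S_2^{-1}\circ\Delta)^{-1}(x)=rx$, I get $\pi(x)=S_1^{-1}\circ\Delta(rx)=(t+sr^{-1})(rx)=(tr+s)x$, as claimed.

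The remaining and main task is the set-theoretic Yang--Baxter equation, which I would verify through its three component equations $(1)$--$(3)$ by substituting the formulas and matching coefficients of $x,y,z$. I expect $(2)$ and $(3)$ to hold automatically from commutativity of $\tilde\Lambda$ (both sides of $(3)$ collapse to $r^2x$ because $B_2$ ignores its second argument, and $(2)$ reduces to $try+srx=try+rsx$). The one substantive identity is the coefficient of $x$ in $(1)$, where the left side contributes $s$ while the right side contributes $tsr+s^2$; these agree precisely because $s^2=(1-tr)s$ in $\tilde\Lambda$. Thus the defining relation of the ideal $I$ is exactly what the Yang--Baxter condition demands. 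I anticipate that the only real obstacle is the bookkeeping of the nested compositions in $(1)$--$(3)$; the conceptual content is simply recognizing that $I$ was chosen to force this one identity.
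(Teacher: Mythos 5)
Your proposal is correct and follows essentially the same route as the paper: explicit formulas for $B^{-1}$, $S$, $S^{-1}$, diagonal bijectivity via the preceding Lemma and Corollary, and reduction of the three Yang--Baxter component equations to the single defining identity $s^2=(1-tr)s$, with your kink-map computation $\pi=S_1^{-1}\circ\Delta\circ(S_2^{-1}\circ\Delta)^{-1}$ being just the definitional form of the paper's $g\circ f^{-1}$ calculation. One cosmetic slip: your displayed $B^{-1}(u,v)$ has its output components transposed --- since your own derivation gives $x=r^{-1}v$ and $y=t^{-1}u-t^{-1}sr^{-1}v$, the inverse is $B^{-1}(u,v)=(r^{-1}v,\ t^{-1}u-t^{-1}sr^{-1}v)$ (the paper's displayed formula contains an analogous transposition of its arguments, so the substance of the invertibility argument is unaffected).
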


\begin{proof}

We must check that $B$ is strongly invertible. First, note that
$B^{-1}(x,y)=(r^{-1}x,t^{-1}y-st^{-1}r^{-1}x)$ so $B$ is invertible. Next, 
the sideways map $S$ is given by $S(x,y)=(ry,t^{-1}x-t^{-1}sy)$ with 
inverse $S^{-1}(x,y)=(ty+sr^{-1}x,r^{-1}x)$. Finally, for diagonal 
invertibility, we have $f(x)=t^{-1}(1-s)x$ and $g(x)=r(x),$ so
$\pi(x)=(g\circ f^{-1})(x)=t(1-s)^{-1}rx=tr(1+t^{-1}r^{-1}s)x=(tr+s)x$.

Next, we must check that the set-theoretic Yang-Baxter equation is satisfied.
In component form, we have
\begin{eqnarray*}
t^2z+tsy+sx & = & t^2z+sty+(str+s^2)x \\
r(ty+sx) & = & t(ry)+s(rx) \\
r^2 x & = & r^2 x.
\end{eqnarray*}
Commutativity in $\tilde\Lambda$ satisfies the second equation and reduces the 
first equation to $(1-tr)s=s^2$.
\end{proof}

For the purpose of defining counting invariants with $(t,s,r)$-biracks,
we'll need \textit{finite} examples. The simplest way to do this is to
choose a finite abelian group, say $A=\mathbb{Z}_n$, and choose elements
$t,s,r\in A$ such that $t,r$ are invertible and $s^2=(1-tr)s$. Then the
set $A^m$ of $m$-tuples of $A$ forms a finite $(t,s,r)$-rack.

\begin{corollary}
A finite $(t,s,r)$-birack structure $(A,B)$ on an abelian group
$A$ is a strong biquandle iff $tr+s=1$.
\end{corollary}

\begin{corollary}
The birack rank of a finite $(t,s,r)$-birack is the smallest integer $N>0$
such that $(tr+s)^N=1$.
\end{corollary}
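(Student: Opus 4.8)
The plan is to reduce the assertion about the kink map $\pi$ to a purely multiplicative statement in the coefficient ring. By the Definition of birack rank, $N(X,B)$ is the least positive integer with $\pi^N=\mathrm{Id}_X$, and by the Proposition the kink map of a $(t,s,r)$-birack is $\pi(x)=(tr+s)x$. Iterating gives $\pi^N(x)=(tr+s)^N x$ for every $x\in X$, where in the finite construction $X=A^m$ with $A=\mathbb{Z}_n$ and the scalar $tr+s\in A$ acts coordinatewise. So the corollary amounts to showing that the condition ``$(tr+s)^N x=x$ for all $x\in A^m$'' is equivalent to the ring equation ``$(tr+s)^N=1$ in $A$,'' and then comparing least solutions.

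First I would check that $tr+s$ is a unit of $A$, so that the order in question exists. It is the image of the element $\pi=tr+s\in\tilde\Lambda$, which is invertible by the Lemma with inverse $t^{-1}r^{-1}(1-s)$, under the ring map $\tilde\Lambda\to A$ that defines the module structure (well defined because $s^2-(1-tr)s$ maps to $0$ by the hypothesis $s^2=(1-tr)s$ in $A$). Since ring homomorphisms send units to units, $tr+s$ is a unit of $A$ and hence has a finite multiplicative order; in particular the set of $N>0$ with $(tr+s)^N=1$ is nonempty, and I let $N_0$ be its least element.

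Next I would establish the two inequalities. If $(tr+s)^{N}=1$ in $A$, then $(tr+s)^{N}x=x$ for every $x\in A^m$, whence $\pi^{N}=\mathrm{Id}$; applying this with $N=N_0$ shows $N_0$ kills $\pi$, so $N(X,B)\le N_0$. Conversely, suppose $\pi^{N}=\mathrm{Id}$, i.e. $(tr+s)^{N}x=x$ for all $x\in A^m$. Because multiplication by $tr+s$ acts independently in each coordinate of $A^m=\mathbb{Z}_n^m$, I may evaluate at the element having $1\in A$ in a single coordinate and $0$ elsewhere; this forces $(tr+s)^{N}\cdot 1=1$, that is $(tr+s)^{N}=1$ in $A$, so $N\ge N_0$ for the minimal such $N$. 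Combining the two inequalities yields $N(X,B)=N_0$, which is exactly the claim.

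There is essentially no hard step here: once the kink map is recognized as scalar multiplication by $tr+s$ on the free module $A^m$, the only point requiring any care is the forward direction, where I must exhibit a module element on which scalar multiplication faithfully detects the ring identity $(tr+s)^N=1$. The multiplicative identity $1\in A$ placed in one coordinate does precisely this, since $A$ is faithful over itself. I would also remark in passing that this argument makes the rank manifestly independent of $m\ge 1$.
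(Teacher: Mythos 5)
Your proof is correct and follows the same route the paper leaves implicit: the corollary is an immediate consequence of the preceding proposition identifying the kink map as scalar multiplication, $\pi(x)=(tr+s)x$, together with the definition of birack rank as the least $N$ with $\pi^N=\mathrm{Id}$. Your one added point of care --- evaluating at a coordinate vector with $1$ in one slot to show that $A^m$ detects the ring identity $(tr+s)^N=1$ faithfully --- is exactly the detail the paper glosses over, and your remark that the rank is independent of $m$ is a correct (if unstated) consequence.
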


\begin{example}
\textup{Let $A=\mathbb{Z}_4$ and set $t=3,s=2$ and $r=3$. Then 
$s^2=4=0=2(1-9)=s(1-tr)$ so $A^m$ is a finite $(t,s,r)$-birack 
with birack operation $B(x,y)=(3y+2x,3x)$.
$|A^m|=4^m$ and $A_m$ has birack rank $2$ since $tr+s=9+2=3$ and $3^2=9=1$.}
\end{example}

\begin{remark}\textup{
$(t,s,r)$-biracks have several important special cases. If we set
$r=1$, we have a \textit{$(t,s)$-rack} as defined in \cite{FR}.
introduced in \cite{FR} and subsequently studied in papers such as \cite{CN2}. 
In the case $s=1-tr$
we have a biquandle known as an \textit{Alexander biquandle}, introduced in
\cite{KR} and subsequently studied in papers such as \cite{LN}. If we
set $r=1$ and $s=1-t$, we have an \textit{Alexander quandle}, introduced
in \cite{J} and studied in many papers such as \cite{AG, N3}.}
\end{remark}

\begin{remark}\textup{
The general case of biquandle structures defined via linear operations
on modules over non-commutative rings has been
studied in several recent papers \cite{BF1, BF2, FT}. Certain cases of these 
(known as \textit{quaternionic biquandles}) yield invariants of
virtual knots analogous to the Alexander polynomial.
}\end{remark}

Every $\tilde\Lambda$-module is an abelian group under addition with 
multiplication by $t$ and $r$ acting as automorphisms of the additive 
structure and multiplication by $s$ acting as an endomorphism. This
suggests a way to generalize the $(t,s,r)$-birack definition by 
unabelianizing the group structure.

\begin{proposition}
\textup{Let $G$ be a group with $\tau,\rho\in \mathrm{Aut}(G)$ and
$\sigma\in \mathrm{End}(G)$ such that $\rho$ commutes with $\tau$ and 
$\sigma$ and satisfying for all $y,z\in G$}
\begin{equation}\label{Eq1}
\tau\sigma(y)\sigma(z)=\tau\sigma\rho(z)\sigma\tau(y)\sigma^2(z). 
\end{equation}
\textup{Define $B:G\times G\to G\times G$ by 
$B(x,y)=(\tau(y)\sigma(x),\rho(x))$. Then $(G,B)$ is a birack 
with kink map given by $\pi(x)=\tau\rho(x)\sigma(x)$; we call this
a \textit{$(\tau,\sigma,\rho)$-birack.}}
\end{proposition}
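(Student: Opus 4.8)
The plan is to follow the template of the $(t,s,r)$-birack proposition, ``noncommutativizing'' each formula while tracking the order of group multiplication and exploiting that $\rho$ is central among $\tau,\sigma,\rho$. First I would write down the auxiliary maps by solving the defining relations directly. From $B(x,y)=(\tau(y)\sigma(x),\rho(x))$ one reads off
\[
B^{-1}(x,y)=\bigl(\rho^{-1}(y),\ \tau^{-1}(x)(\tau^{-1}\sigma\rho^{-1}(y))^{-1}\bigr),
\]
and solving $S(B_1(x,y),x)=(B_2(x,y),y)$ gives $S(x,y)=(\rho(y),\ \tau^{-1}(x)(\tau^{-1}\sigma(y))^{-1})$ with $S^{-1}(x,y)=(\tau(y)\sigma\rho^{-1}(x),\ \rho^{-1}(x))$. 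Uniqueness of $S$ is automatic: for fixed $x$ the element $B_1(x,y)=\tau(y)\sigma(x)$ ranges over all of $G$ as $y$ does (since $\tau$ is onto), so the defining relation pins $S$ down on all of $G\times G$; invertibility is witnessed by the explicit $S^{-1}$. Each of these identities is mechanical to check using only that $\tau,\rho$ are automorphisms.

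Next I would dispatch the Yang--Baxter equation through its three component equations. Computing with $B_1(x,y)=\tau(y)\sigma(x)$ and $B_2(x,y)=\rho(x)$, the third equation collapses to $\rho^2(x)=\rho^2(x)$, and the second becomes $\tau\rho(y)\sigma\rho(x)=\rho\tau(y)\rho\sigma(x)$, which holds precisely because $\rho$ commutes with $\tau$ and $\sigma$. Expanding the first equation gives
\[
\tau^2(z)\,\tau\sigma(y)\,\sigma(x)=\tau^2(z)\,\tau\sigma\rho(x)\,\sigma\tau(y)\,\sigma^2(x);
\]
cancelling $\tau^2(z)$ on the left leaves exactly hypothesis $(\ref{Eq1})$ with $z$ renamed to $x$. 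So the Yang--Baxter equation is equivalent to $(\ref{Eq1})$, with no extra conditions required.

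The hard part will be diagonal bijectivity, the genuine non-abelian analogue of the Lemma and Corollary used for $(t,s,r)$-biracks. Two of the four diagonal maps are harmless, since $S_1\circ\Delta=\rho$ and $S_2^{-1}\circ\Delta=\rho^{-1}$ are automorphisms; the content lies in $S_2\circ\Delta(x)=\tau^{-1}(x\sigma(x)^{-1})$ and in $S_1^{-1}\circ\Delta$. Writing $\Sigma(x)=x\sigma(x)^{-1}$, both reduce to the single statement that $\Sigma$ is a bijection. My plan is to first extract two consequences of $(\ref{Eq1})$ by specialization: setting $z=e$ yields $\tau\sigma=\sigma\tau$, so that $\tau,\sigma,\rho$ now pairwise commute, and setting $y=e$ yields the key relation
\[
\sigma(z)=\tau\sigma\rho(z)\,\sigma^2(z), \qquad \text{equivalently} \qquad \sigma(z)\sigma^2(z)^{-1}=\tau\sigma\rho(z).
\]
Guided by the abelian inverse $1+t^{-1}r^{-1}s$, I would then guess $\Psi(x)=x\,(\tau\rho)^{-1}\sigma(x)$ and verify $\Sigma\circ\Psi=\Psi\circ\Sigma=\mathrm{Id}$ by a short telescoping computation in which the correction terms cancel after applying the relation $\sigma(z)\sigma^2(z)^{-1}=\tau\sigma\rho(z)$; this makes $S_2\circ\Delta=\tau^{-1}\circ\Sigma$ a bijection. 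For the last map I would observe $S_1^{-1}\circ\Delta=\pi\circ\rho^{-1}$, so it suffices that the claimed kink map $\pi(x)=\tau\rho(x)\sigma(x)$ be bijective. Setting $\gamma=\tau\rho$, I would check $\pi\circ(\gamma^{-1}\circ\Sigma)=\mathrm{Id}$ by the same telescoping cancellation; since $\gamma^{-1}\circ\Sigma$ is already a bijection, this forces $\pi=(\gamma^{-1}\circ\Sigma)^{-1}$, matching the abelian $\pi^{-1}=t^{-1}r^{-1}(1-s)$.

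Finally I would record that the kink map, computed from $\pi=S_1^{-1}\circ\Delta\circ(S_2^{-1}\circ\Delta)^{-1}=(S_1^{-1}\circ\Delta)\circ\rho$, is indeed $\pi(x)=\tau\rho(x)\sigma(x)$, completing the verification that $(G,B)$ is a birack. The one place where non-commutativity genuinely bites is the diagonal step: the operator algebra generated by $\tau^{\pm1},\sigma,\rho^{\pm1}$ is commutative, but within a single expression the elements $x,\sigma(x),\sigma^2(x)$ do not commute, so the inverse maps must be confirmed by explicit rearrangement rather than quoted from the $\tilde\Lambda$ computation.
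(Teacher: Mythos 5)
Your proposal is correct, and its skeleton is the same as the paper's: exhibit explicit formulas for $B^{-1}$, $S$, $S^{-1}$, then verify the Yang--Baxter equation componentwise, noting the third component is trivial, the second is exactly the commutation of $\rho$ with $\tau$ and $\sigma$, and the first cancels to hypothesis (\ref{Eq1}). Your formulas agree with the paper's; indeed your $S^{-1}(x,y)=(\tau(y)\sigma\rho^{-1}(x),\rho^{-1}(x))$ corrects an apparent typo in the paper, whose printed second component $\rho^{-1}(y)$ is inconsistent both with its own $S$ and with the $(t,s,r)$ specialization $S^{-1}(x,y)=(ty+sr^{-1}x,r^{-1}x)$. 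Where you genuinely go beyond the paper is diagonal bijectivity: the paper disposes of it with ``it is a straightforward computation,'' whereas you supply the actual content---and rightly so, since bijectivity of $\Sigma(x)=x\sigma(x)^{-1}$ and of $\pi(x)=\tau\rho(x)\sigma(x)$ is not formal but depends on (\ref{Eq1}). Your two specializations, $z=e$ giving $\tau\sigma=\sigma\tau$ (so the three operators pairwise commute) and $y=e$ giving $\sigma(z)=\tau\sigma\rho(z)\sigma^2(z)$, are exactly the non-abelian counterparts of $s^2=(1-tr)s$, and your candidate inverse $\Psi(x)=x(\tau\rho)^{-1}\sigma(x)$ is the correct translation of the paper's Lemma and Corollary, $\pi^{-1}=t^{-1}r^{-1}(1-s)$ and $(1-s)^{-1}=1+t^{-1}r^{-1}s$; I verified that $\Sigma\circ\Psi$ and $\Psi\circ\Sigma$ both telescope to the identity using the $y=e$ relation together with operator commutativity, and that the one-sided identity $\pi\circ(\gamma^{-1}\circ\Sigma)=\mathrm{Id}$ with $\gamma^{-1}\circ\Sigma$ already bijective does force $\pi=(\gamma^{-1}\circ\Sigma)^{-1}$, hence bijective, while $\pi=(S_1^{-1}\circ\Delta)\circ\rho$ recovers the claimed kink map. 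Your uniqueness argument for $S$ (surjectivity of $y\mapsto\tau(y)\sigma(x)$ for fixed $x$ pins $S$ down on all of $G\times G$) is likewise a point the paper leaves implicit. In short: same route, but your version actually proves the step the paper only asserts, and the derived identities $\tau\sigma=\sigma\tau$ and $\tau\sigma\rho(z)=\sigma(z)\sigma^2(z)^{-1}$ would make a worthwhile standalone lemma.
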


\begin{proof}
It is a straightforward computation to show that 
\begin{eqnarray*}
B^{-1}(x,y) & = & (\rho^{-1}(y),\tau^{-1}(x(\sigma\rho^{-1}(y))^{-1})),\\
S(x,y) & = & (\rho(y),\tau^{-1}(x\sigma(y)^{-1})), \\ 
S^{-1}(x,y) & = & (\tau(y)\sigma(\rho^{-1}(x)),\rho^{-1}(y)) \\ 
\end{eqnarray*}
and that we have
diagonal invertibility with kink map given by $\pi(x)=\tau\rho(x)\sigma(x)$.

To see that the set-theoretic Yang-Baxter equation is satisfied,
we note that $B$ gives us the Yang-Baxter component form equations
\begin{eqnarray*}
\tau^2(x)\tau\sigma(y)\sigma(z) & = & 
\tau^2(x)\tau\sigma\rho(z)\sigma\tau(y)\sigma^2(z) \\
\tau\rho(y)\sigma\rho(z) & = & \rho\tau(y)\rho\sigma(z) \\
\rho^2(z) & = & \rho^2(z). 
\end{eqnarray*}
The first equation reduces to equation (\ref{Eq1}) and the second requires
that $\rho$ commute with $\sigma$ and $\tau$.
\end{proof}

A few special cases of $(\tau,\sigma,\rho)$-biracks appear in the literature;
the case where the kink map $\pi(x)=\mathrm{Id}(x)$ is a biquandle known
as a \textit{Silver-Williams switch} \cite{FJK}, and in the quandle
case, i.e. $\rho(x)=\mathrm{Id}(x)$ and $\sigma(x)=\tau(x^{-1})x$, we have
a quandle structure incorrectly referred to by the present author in previous
work as ``homogeneous quandles''; a better term would be \textit{multiplicative
Alexander quandles} as these need not be homogeneous in the sense of \cite{J}.
The special case where $G$ is the automorphism group 
$\mathrm{Aut}(Q)$ of a quandle $Q$ and $\tau$ is conjugation in $G$ by 
a chosen inner automorphism turns out to be the key to the relationship 
between the knot group and the knot quandle; see \cite{J,M} for more.

\begin{example}
\textup{Let $G=\langle \alpha,\beta\ |\ \alpha^4=1,\ \beta^m=1, \
\alpha\beta=\beta\alpha^{-1} \rangle$ and let 
$\tau(\alpha^i\beta^j)=\rho(\alpha^i\beta^j)=\beta^j\alpha^i$ and
$\sigma(\alpha^i\beta^j)=\alpha^{2i}$. Then $\tau,\rho\in \mathrm{Aut}(G)$,
$\sigma\in\mathrm{End}(G)$ and we have
$\rho\tau=\mathrm{Id}=\tau\rho \quad \mathrm{and}\quad 
\rho\sigma=\sigma=\sigma\rho.$ Then for  $y=\alpha^i\beta^j$ and $z=\alpha^k\beta^l$, we have}
\[\tau\sigma(y)\sigma(z)=
\alpha^{2i}\alpha^{2k}=\alpha^{2k}\alpha^{2i}\alpha^{4k}=
\tau\sigma\rho(z)\sigma\tau(y)\sigma^2(z)\]
\textup{and thus we have a $(\tau,\sigma,\rho)$-birack. Here the kink map
is given by $\pi(\alpha^i\beta^j)=\alpha^{3i}\beta^j,$ so we have birack rank
$N=2$.}

\end{example}

\section{\large\textbf{Birack link invariants}} \label{S4}

By construction, there is a bijection between the sets of labelings of any
two blackboard-framing-isotopic link diagrams by the same birack
$(X,B)$. In particular, as we increment the writhe of the components in
the diagram, the number of labelings of a knot diagram by a birack 
has period $N$ in each component where $N$ is the birack rank. As in \cite{N}, 
we can sum these basic counting invariants over a complete period of framings 
to define an invariant of unframed ambient isotopy classes of oriented links,
provided the labeling birack $X$ is finite. In particular, a link 
$L=L_1\cup \dots \cup L_c$ of $c$ components has writhe vectors in
$W=(\mathbb{Z}_N)^c$ with respect to the labelings by a blackboard
birack $(X,B)$.

\begin{definition}
\textup{Let $(X,B)$ be a birack with birack rank $N$, 
$L=L_1\cup\dots\cup L_c$ a $c$-component link and $W=(\mathbb{Z}_N)^c$. Then
the \textit{integral birack counting invariant} of $L$ with 
respect to $(X,B)$ is}
\[\phi_{(X,B)}^{\mathbb{Z}}(L;(X,B))=\sum_{\mathbf{w}\in W} 
|\mathrm{Hom}(BR(L,\mathbf{w}),X)|.\]
\end{definition}

\begin{remark}
\textup{If we define blackboard-framed isotopy of virtual links in the
obvious way, namely as the result of replacing the classical type I moves
with blackboard-framed double type I moves and keeping all other moves
the same, then ignoring virtual crossings lets us extend the counting 
invariant and its enhancements below to virtual knots without modification.}
\end{remark}

\begin{example} \label{ex:hopf}
\textup{The smallest birack which is neither a biquandle nor a
rack is the two-element constant action birack $X=\{1,2\}$ with 
$\tau=\mathrm{Id}_X$ and $\rho=(12)$, i.e., with birack matrix}
\[M_B=\left[\begin{array}{cc|cc}
1 & 1 & 2 & 2 \\
2 & 2 & 1 & 1
\end{array}\right].
\]
\textup{Interpreted as labeling rules, $(X,B)$ says we switch from 1 to 2 or
2 to 1 when going over a crossing and keep the same label when going under
a crossing. In particular, the kink map $\pi$ is the transposition $(12)$
so we have birack rank $N=2$. The counting invariant $\phi^{\mathbb{Z}}_{BR}(L)$
of a link $L$ is then the total number of colorings over a 
complete set of diagrams of $L$ with every combination of 
even and odd writhes on each component. Thus, both the Hopf link and the
unlink of two components have four labelings by $(X,B)$}

\[\begin{array}{|c|c|c|c|} \hline
\mathbf{w}=(0,0) & \mathbf{w}=(1,0) & 
\mathbf{w}=(0,1) & \mathbf{w}=(1,1) \\ \hline
\scalebox{0.7}{\includegraphics{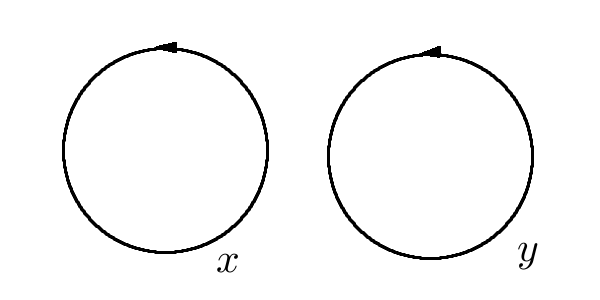}} & 
\scalebox{0.7}{\includegraphics{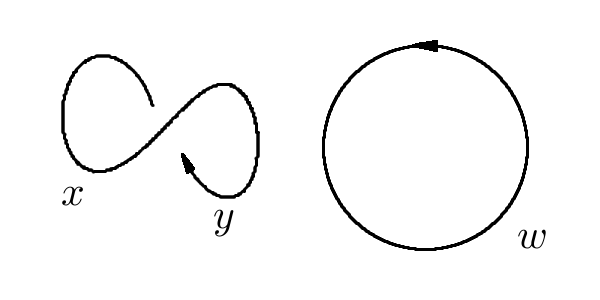}} &
\scalebox{0.7}{\includegraphics{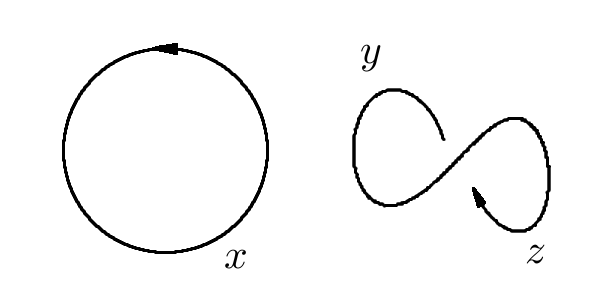}} & 
\scalebox{0.7}{\includegraphics{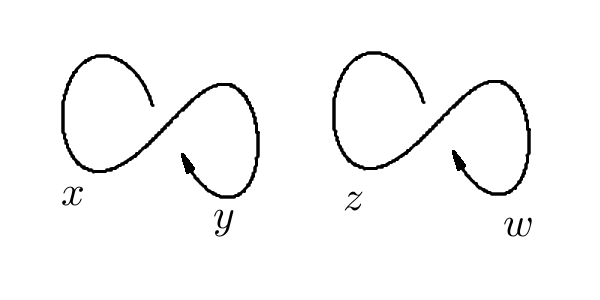}} \\ \hline
\begin{array}{cc}
x & y \\ \hline
1 & 1 \\
1 & 2 \\
2 & 1 \\ 
2 & 2 \\
\end{array} &
\begin{array}{ccc}
x & y & w \\ \hline 
- & - & - \\
\end{array} &
\begin{array}{ccc}
x & y & z \\ \hline
- & - & - \\
\end{array} &
\begin{array}{cccc}
x & y & z & w \\ \hline
- & - & - & - \\
\end{array} \\ \hline
\end{array}
\]
\[\begin{array}{|c|c|c|c|} \hline
\mathbf{w}=(0,0) & \mathbf{w}=(1,0) & 
\mathbf{w}=(0,1) & \mathbf{w}=(1,1) \\ \hline
\scalebox{0.7}{\includegraphics{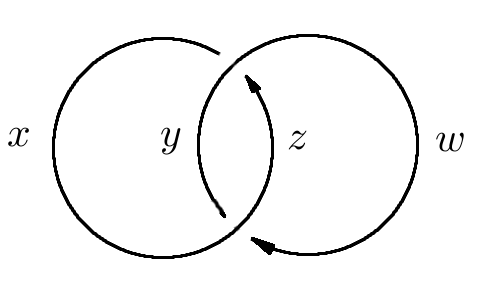}} & 
\scalebox{0.7}{\includegraphics{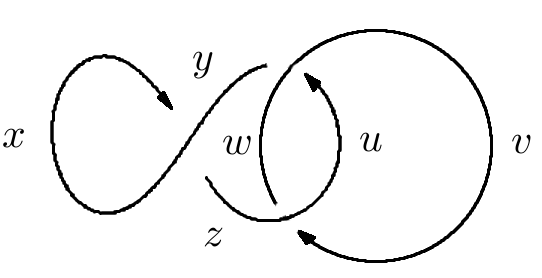}} &
\scalebox{0.7}{\includegraphics{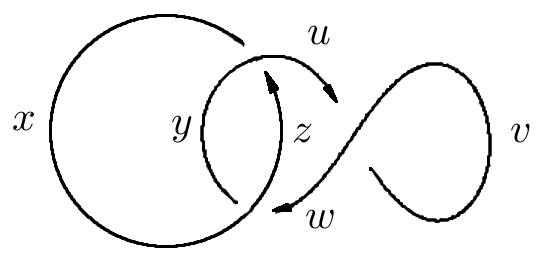}} & 
\scalebox{0.7}{\includegraphics{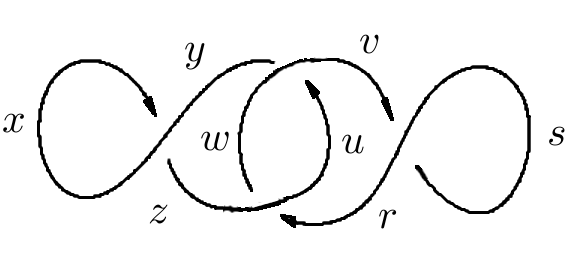}} \\ \hline
\begin{array}{cccc}
x & y & z & w\\ \hline
- & - & - & - \\
\end{array} &
\begin{array}{cccccc}
x & y & z & w & u & v \\ \hline
- & - & - & - & - & - \\
\end{array} &
\begin{array}{cccccc}
x & y & z & w & u & v \\ \hline
- & - & - & - & - & - \\
\end{array} &
\begin{array}{cccccccc}
x & y & z & w & u & v & r & s \\ \hline
1 & 2 & 1 & 1 & 2 & 2 & 1 & 2 \\
1 & 2 & 1 & 2 & 2 & 1 & 2 & 1 \\
2 & 1 & 2 & 1 & 1 & 2 & 1 & 2 \\
2 & 1 & 2 & 2 & 1 & 1 & 2 & 1 \\  
\end{array} \\ \hline
\end{array}
\]

\end{example}

We notice that the total numbers of labelings of both links
over one framing period mod $N$ are the same, but the labelings occur
in different framings. To exploit this information, we will need to
enhance the counting invariant.

An \textit{enhancement} of a counting invariant is an invariant which 
specializes to the counting invariant but includes extra information
to distinguish between labelings, making enhanced invariants stronger
than the original counting invariants. One way to enhance counting
invariants is to define a ``signature'' for each labeling which is 
well-defined under Reidemeister moves on the labeled diagram; the 
resulting multiset of signatures is then an enhancement of the
counting invariant. Examples include using quandle 2-cocycles to
define a \textit{Boltzmann weight} signature for quandle labelings
of $L$ (see \cite{CJKLS}) as well as using extra structure of the
labeling object to define signatures (see \cite{NN, NW} etc.).
Taking a generating function lets us express 
these multiset-valued invariants as polynomials by converting 
multiplicities to coefficients and signatures to exponents, e.g.
the multiset $\{0,0,1,1,1,2\}$ becomes the polynomial $2+3z+z^2$.

As with the rack counting invariant, one easy type of enhancement involves 
keeping track of which framings contribute which labelings.  
Let $q^{\mathbf{w}}=\prod_{i=1}^c q_i^{w_i}$ for 
$\mathbf{w}=(w_1,\dots,w_c)\in W$. Then we have:

\begin{definition}
\textup{Let $(X,B)$ be a birack with birack rank $N$, 
$L=L_1\cup\dots\cup L_c$ a $c$-component link and $W=(\mathbb{Z}_N)^c$.
The \textit{writhe-enhanced birack counting invariant} of $L$ with 
respect to $(X,B)$ is}
\[\phi_{(X,B)}^{W,M}(L)=
\{ (|\mathrm{Hom}(BR(L,\mathbf{w}),(X,B))|,\mathbf{w})\ :\ \mathbf{w}\in W\}\]
\textup{in multiset form; in polynomial form we have:}
\[\phi_{(X,B)}^{W}(L)=\sum_{\mathbf{w}\in W} 
|\mathrm{Hom}(BR(L,\mathbf{w}),X)|q^{\mathbf{w}}.\]
\end{definition}

\begin{example}
\textup{The Hopf link and two-component unlink, while not distinguished by
the integral counting invariant determined by the birack in example
\ref{ex:hopf}, are distinguished by the writhe-enhanced invariant determined
by the same birack, with $\phi_{(X,B)}^{W}(\mathrm{Hopf})=4q_1q_2\ne
4=\phi_{(X,B)}^{W}(\mathrm{Unlink})$.}
\end{example}

\begin{remark}
\textup{In the case of birack rank $N=1$, i.e. if $(X,B)$ is a quandle or 
strong biquandle, the integral and writhe-enhanced polynomial invariants 
are the same; indeed, in this case both consist of single basic counting 
invariant $|\mathrm{Hom}(BR(L),(X,B))|$. If $B_2(x,y)=x$ so that $(X,B)$ 
is a rack, then the integral and writhe-enhanced
polynomial birack counting invariants are the integral and polynomial
rack counting invariants described in \cite{N}.}
\end{remark}

Another straightforward enhancement uses the cardinality of the image 
subbirack as a signature:
\begin{definition}
\textup{Let $(X,B)$ be a birack with birack rank $N$, 
$L=L_1\cup\dots\cup L_c$ a $c$-component link and $W=(\mathbb{Z}_N)^c$. 
The \textit{image-enhanced birack counting invariant} of $L$ with 
respect to $(X,B)$ is}
\[\phi_{(X,B)}^{\mathrm{Im},M}(L)=\{ |\mathrm{Im}(f)|\ : \ 
f\in\mathrm{Hom}(BR(L,\mathbf{w}),(X,B)), \mathbf{w}\in W\}\]
\textup{in multiset form; in polynomial form we have:}
\[\phi_{(X,B)}^{\mathrm{Im}}(L)=\sum_{\mathbf{w}\in W} \left(
\sum_{f\in\mathrm{Hom}(BR(L,\mathbf{w}),(X,B))} z^{|\mathrm{Im}(f)|}\right).\]
\end{definition}

\begin{example}
\textup{Let $L$ be the trefoil knot $3_1$ and $(X,B)$ the $(t,s,r)$-birack
structure on $X=\mathbb{Z}_3$ given by $B(x,y)=(y+2x,2x)$. Here the kink map
is $\pi(x)=(tr+s)x=(2+2)x=x$ so we have $N=1$ and can compute the invariant
from a diagram with any writhe. Since $\mathbb{Z}_3$ is a field,
we can find the $\mathbb{Z}_3$-vector space of labelings of $L$ using linear
algebra:}
\[\includegraphics{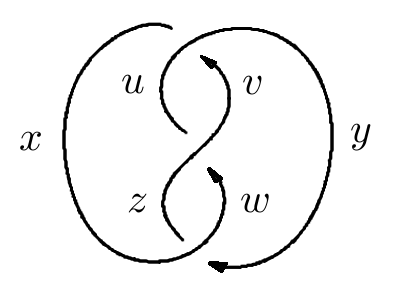}\]
\textup{We have a presentation 
$BR(3_1)=\langle x,y,z,w,u,v\ |\ 
B(x,y) =(z,w), B(z,w)=(u,v), B(u,v)=(x,y) \rangle$. This translates into 
a coefficient matrix for a homogeneous system of linear equations, which 
we row-reduce over $\mathbb{Z}_3$:}
\[
\begin{array}{rcl}
2x+y+2z & = & 0 \\
2x + 2w & = & 0 \\
2z+w+2u & = & 0 \\
2z+ 2v  & = & 0 \\
2x+2u+v & = & 0 \\
2y+ 2u  & = & 0
\end{array}\quad \to \quad
\left[\begin{array}{cccccc}
2 & 1 & 2 & 0 & 0 & 0 \\
2 & 0 & 0 & 2 & 0 & 0 \\
0 & 0 & 2 & 1 & 2 & 0 \\
0 & 0 & 2 & 0 & 0 & 2 \\
2 & 0 & 0 & 0 & 2 & 1 \\
0 & 2 & 0 & 0 & 2 & 0 \\
\end{array}\right] \quad \to \quad
\left[\begin{array}{cccccc}
1 & 0 & 0 & 0 & 1 & 2 \\
0 & 1 & 0 & 0 & 1 & 0 \\
0 & 0 & 1 & 0 & 0 & 1 \\
0 & 0 & 0 & 1 & 2 & 1 \\
0 & 0 & 0 & 0 & 0 & 0 \\
0 & 0 & 0 & 0 & 0 & 0 \\
\end{array}\right]
\]
\textup{Thus we can see that the set of labelings is the $\mathbb{Z}_3$-span of
the set $\{(1,1,0,2,2,0),(2,0,1,1,0,2)\}$; thus, eight of the $3^2=9$ labelings
considered  as birack homomorphisms $f:BR(3_1)\to (\mathbb{Z}_3,B)$ are 
surjective, while the trivial linear combination represents the zero
homomorphism. Thus, the integral counting invariant is 
$\phi^{\mathbb{Z}}_{(\mathbb{Z}_3,B)}(3_1)=9$ while the image-enhanced invariant is
$\phi^{\mathrm{Im}}_{(\mathbb{Z}_3,B)}(3_1)=1+8z^3$. Similar computations yield
$\phi^{\mathrm{Im}}_{(\mathbb{Z}_3,B)}(\mathrm{Unknot})=1+2z^3$ and 
$\phi^{\mathrm{Im}}_{(\mathbb{Z}_3,B)}(4_1)=1$.}
\end{example}

\begin{remark}
\textup{As the above example illustrates, the set of labelings of a knot
or link by a $(t,s,r)$-birack structure on $(\mathbb{Z}_p)^n$ for $p$
prime is a $\mathbb{Z}_p$-vector space; hence, the counting invariant
$\phi_{((\mathbb{Z}_p)^n,B)}^{\mathbb{Z}}(L)$ equals $p^m$ for some $m\ge 0$.}
\end{remark}

\begin{remark}
\textup{Say that a birack is \textit{simple} if it has no 
non-empty proper subbiquandles, e.g. the birack in example \ref{ex:hopf}. 
Then for simple  biracks, the integral 
and image-enhanced invariants contain equal
information; more precisely, for simple biracks we have
\[\phi^{\mathbb{Z}}_{(X,B)}(L)z^{|X|}=\phi^{\mathrm{Im}}_{(X,B)}(L).\] Note
that if $(X,B)$ is a quandle then singletons are proper subbiracks, so
quandles with $|X|\ge 2$ are not simple biracks.}
\end{remark}

For our final enhancement of the birack counting invariants,
we will use \textit{birack polynomials}. In \cite{NP} a two-variable 
polynomial invariant is defined for finite quandles and extended to 
biquandles in \cite{NP2} and racks in \cite{CN}. These polynomials quantify 
the way in which the trivial action of elements of $X$
on $X$ is distributed throughout the birack, as opposed to
being concentrated in a single identity element as in a group. 
We will use a simplified version of the biquandle polynomial to define 
an enhancement of the birack counting invariant. 

\begin{definition}
\textup{Let $(X,B)$ be a finite birack. 
For each $x\in X$ let }
\[
c_1(x)  =  |\{y\in X \ : \ B_1(y,x) = y\}|, \quad
c_2(x)  =  |\{y\in X \ : \ B_2(x,y) = y\}|, \]\[
r_1(x)  =  |\{y\in X \ : \ B_1(y,x) = x\}|, \quad \mathrm{and} \quad
r_2(x)  =  |\{y\in X \ : \ B_2(x,y) = x\}|. 
\]
\textup{The \textit{birack polynomial} of $(X,B)$, denoted 
$\rho_{(X,B)}(s_1,t_1,s_2,t_2)$, is the four-variable polynomial}
\[\rho_{(X,B)}(s_1,s_2,t_1,t_2)
=\sum_{x\in X} s_1^{c_1(x)}s_2^{c_2(x)}t_1^{r_1(x)}t_2^{r_2(x)}.\]
\end{definition}

\begin{proposition}
If $(X,B)$ and $(X',B')$ are isomorphic biracks, then
\[\rho_{(X,B)}(s_1,s_2,t_1,t_2)=\rho_{(X',B')}(s_1,t_1,s_2,t_2).\]
\end{proposition}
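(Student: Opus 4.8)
The plan is to fix a birack isomorphism $h:X\to X'$, i.e. a bijection with $h(B_1(a,b))=B_1'(h(a),h(b))$ and $h(B_2(a,b))=B_2'(h(a),h(b))$, and to show that $h$ carries each of the four local statistics attached to $x\in X$ to the corresponding statistic attached to $h(x)\in X'$. Writing $c_i',r_i'$ for the counts computed in $(X',B')$, I would prove $c_1'(h(x))=c_1(x)$, $c_2'(h(x))=c_2(x)$, $r_1'(h(x))=r_1(x)$ and $r_2'(h(x))=r_2(x)$ for every $x$. Once these four slot-by-slot equalities are in hand the two generating sums agree term by term under the reindexing $x\mapsto h(x)$, and the displayed identity is the bookkeeping that records which formal variable is paired with which statistic on each side.

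First I would note exactly what structure is available. All four statistics $c_1,c_2,r_1,r_2$ are defined purely in terms of the components $B_1$ and $B_2$; the sideways map $S$ never enters their definitions, so the two homomorphism conditions on $B_1$ and $B_2$ are all that is required. I would also observe that only the forward homomorphism property and the bijectivity of $h$ are needed, so there is no need to invoke that $h^{-1}$ is again a homomorphism.

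Next I would carry out the count-preservation, one statistic at a time; each reduces to a single substitution. For $c_1$, write $c_1'(h(x))=|\{y'\in X':B_1'(y',h(x))=y'\}|$ and set $y'=h(y)$, which is legitimate since $h$ is a bijection. The homomorphism property turns $B_1'(h(y),h(x))=h(y)$ into $h(B_1(y,x))=h(y)$, and injectivity of $h$ reduces this to $B_1(y,x)=y$; hence the set counted for $h(x)$ is precisely the $h$-image of the set counted for $x$, and the cardinalities agree. The identical substitution (testing equality against $x$ instead of $y$, or using $B_2(x,\,\cdot\,)$ in place of $B_1(\,\cdot\,,x)$) gives $r_1'(h(x))=r_1(x)$, $c_2'(h(x))=c_2(x)$ and $r_2'(h(x))=r_2(x)$. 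The only point requiring attention is that $x$ occupies the second slot of $B_1$ for $c_1,r_1$ but the first slot of $B_2$ for $c_2,r_2$; since the homomorphism condition holds in both arguments simultaneously, this asymmetry causes no trouble.

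Finally I would assemble the four equalities: summing the monomial for $h(x)$ over $x\in X$ (equivalently over $X'$, by bijectivity) and substituting the preserved exponents shows that $\rho_{(X,B)}$ and $\rho_{(X',B')}$ record the same four statistics. I expect the one genuine obstacle here to be notational rather than mathematical. The statement lists the arguments of $\rho_{(X',B')}$ in the order $(s_1,t_1,s_2,t_2)$ while $\rho_{(X,B)}$ appears in the order $(s_1,s_2,t_1,t_2)$, so the proof must make the pairing $s_i\leftrightarrow c_i$, $t_i\leftrightarrow r_i$ from the definition explicit and verify that the two sides are being expressed in the variable ordering dictated by that pairing. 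I would flag that the substantive conclusion delivered by the argument is the slot-by-slot invariance of all four counts, $c_i'(h(x))=c_i(x)$ and $r_i'(h(x))=r_i(x)$; the displayed order of the middle two arguments is a naming convention, and I would make sure the statement and its proof are phrased under one consistent convention so that each variable continues to record the same statistic on both sides.
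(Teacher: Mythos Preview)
Your proposal is correct and follows essentially the same approach as the paper: the paper's proof is the single sentence ``If $\psi:X\to X'$ is an isomorphism, then $c_i(\psi(x))=c_i(x)$ and $r_i(\psi(x))=r_i(x)$ for $i=1,2$,'' which is exactly the slot-by-slot invariance you spell out in detail. You also correctly flagged the inconsistency in the displayed variable orderings $(s_1,s_2,t_1,t_2)$ versus $(s_1,t_1,s_2,t_2)$; this is a typo in the paper rather than a substantive claim, and the intended statement is the literal equality of the two polynomials under the pairing $s_i\leftrightarrow c_i$, $t_i\leftrightarrow r_i$.
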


\begin{proof}
If $\psi:X\to X'$ is an isomorphism, then $c_i(\psi(x))=c_i(x)$ and
$r_i(\psi(x))=r_i(x)$ for $i=1,2$.
\end{proof}

Next, we have a polynomial for subbiracks $Y\subset X$. This polynomial 
contains information not just about the subbirack $Y$ itself but also 
about how $Y$ is embedded in $X$. See \cite{NP} for more.

\begin{definition}
\textup{For any subbirack $Y\subset X$ of $(X,B)$, the subbirack polynomial
of $Y$ is given by}
\[\rho_{Y\subset X}(s_1,s_2,t_1,t_2)
=\sum_{y\in Y} s_1^{c_1(y)}s_2^{c_2(y)}t_1^{r_1(y)}t_2^{r_2(y)}.\]
\end{definition}

\begin{example}\label{ex:un}
\textup{The birack with matrix}
\[M_b=\left[\begin{array}{cccc|cccc}
2 & 2 & 1 & 1 & 2 & 2 & 1 & 1 \\
1 & 1 & 2 & 2 & 1 & 1 & 2 & 2 \\
3 & 4 & 3 & 3 & 4 & 3 & 4 & 4 \\
4 & 3 & 4 & 4 & 3 & 4 & 3 & 3
\end{array}\right]\]
\textup{has birack polynomial $\rho_{(X,B)}=s_1^2t_1^2t_2^2 + s_2^2t_1^2t_2^2 + 
2s_1^4s_2^2t_1^3t_2$. There are two subbiracks, $Y=\{1,2\}$ and $Z=\{3,4\}$
with subbirack polynomials $\rho_{Y\subset X}=s_1^2t_1^2t_2^2 + s_2^2t_1^2t_2^2$
and $\rho_{Z\subset X}=2s_1^4s_2^2t_1^3t_2$.}
\end{example}

\begin{definition}
\textup{Let $(X,B)$ be a finite birack with birack rank $N$,
$L$ a link of $c$ components and $W=(\mathbb{Z}_N)^c$. The 
\textit{birack polynomial enhanced invariant} of $L$ with 
respect to $(X,B)$ is the multiset}
\[\phi^{\rho,M}_{(X,B)}(L)=
\left\{ \rho_{\mathrm{Im}(f)\subset X}\ |\ 
f\in \mathrm{Hom}(BR(L,\mathbf{w}),(X,B)),\ 
\mathbf{w}\in W\right\};\]
\textup{in polynomial form we have}
\[\phi^{\rho}_{(X,B)}(L)=
\sum_{\mathbf{w}\in W} \left(\sum_{f\in \mathrm{Hom}(BR(L,\mathbf{w}),(X,B))} 
z^{\rho_{\mathrm{Im}(f)\subset X}}\right).\]
\end{definition}

\begin{example}
\textup{The polynomial enhanced invariant can distinguish labelings 
with image subbiracks which happen to have the same cardinality but
are not isomorphic or are isomorphic but embedded differently in the
overall birack. The birack in example \ref{ex:un} has birack 
rank 2. The unknot has six labelings by this birack over a period of
writhes $w\in\mathbb{Z}_2$ as pictured, with four labelings occurring in
writhe $w=0$ and two occurring in writhe $w=1$.}
\[\scalebox{0.8}{
\includegraphics{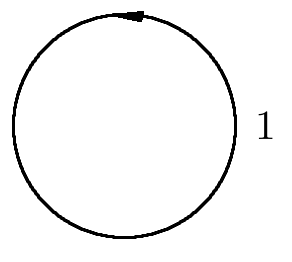}\quad
\includegraphics{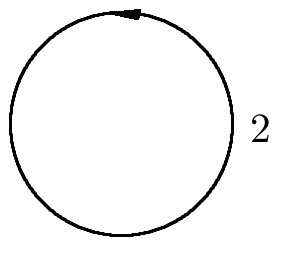}\quad
\includegraphics{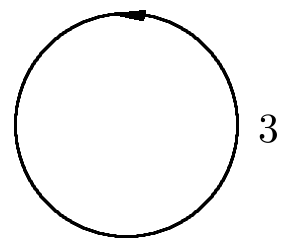}\quad
\includegraphics{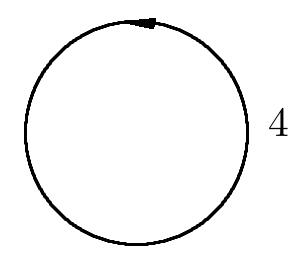} \quad
\includegraphics{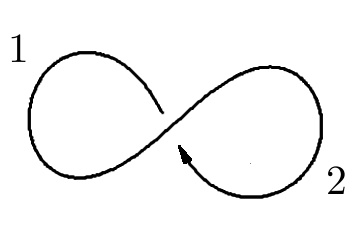} \quad
\includegraphics{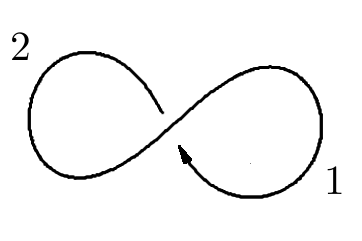} \quad}
\]
\textup{We have}
\[\phi^{\rho}_{(X,B)}(\mathrm{Unknot})
=4z^{s_1^2t_1^2t_2^2 + s_2^2t_1^2t_2^2}+2z^{2s_1^4s_2^2t_1^3t_2}.\]
\end{example}

\begin{remark} \textup{
It is easy to see that the integral counting invariant 
$\phi^{\mathbb{Z}}_{(X,B)}$
can be recovered from $\phi^{\rho}_{(X,B)}(L)$ by specializing $z=1$
and that the image-enhanced invariant can be recovered by
setting $s_i=t_i=1$. When $(X,B)$ is a quandle, $\phi^{\rho}_{(X,B)}(L)$ 
satisfies \[\phi^{\rho}_{(X,B)}(L)=\Phi_{qp}(L)(s_2t_2)^{|X|}\]
where $\Phi_{qp}(L)$ is the quandle polynomial invariant with $s=s_1$ and
$t=t_1$.}
\end{remark}

\begin{example}
\textup{To see that the polynomial-enhanced invariant is stronger than the
counting invariant, we compute (using \texttt{python} code) that the
knots $5_1$ and $6_1$ both have counting invariant $30$ with respect 
to the 10-element birack with matrix listed below, but are distinguished
by the image-enhanced invariant values 
\[\phi^{\rho}_{(X,B)}(5_1)=4z^{s_1^6s_2^{10}t_1t_2^5} + 5z^{s_1^2s_2^6t_1^6t_2^{10}} 
+ z^{s_1^6s_2^{10}t_1^6t_2^{10}} + 20z^{5s_1^2s_2^6t_1^6t_2^{10}}\] and 
\[\phi^{\rho}_{(X,B)}(6_1)=4z^{s_1^6s_2^{10}t_1t_2^5} + 5z^{s_1^2s_2^6t_1^6t_2^{10}} 
+ z^{s_1^6s_2^{10}t_1^6t_2^{10}} + 20z^{4s_1^6s_2^{10}t_1t_2^5 + s_1^6s_2^{10}t_1^6t_2^{10}}
.\]} 
\[
M_B=\left[
\begin{array}{cccccccccc|cccccccccc}
1 & 3 & 5 & 2& 4& 1& 1& 1& 1& 1 & 1& 1& 1& 1& 1& 1& 1& 1& 1& 1 \\
5 & 2 & 4 & 1& 3& 2& 2& 2& 2& 2 & 2& 2& 2& 2& 2& 2& 2& 2& 2& 2 \\
4 & 1 & 3 & 5& 2& 3& 3& 3& 3& 3 & 3& 3& 3& 3& 3& 3& 3& 3& 3& 3 \\
3 & 5 & 2 & 4& 1& 4& 4& 4& 4& 4 & 4& 4& 4& 4& 4& 4& 4& 4& 4& 4 \\
2 & 4 & 1 & 3& 5& 5& 5& 5& 5& 5 & 5& 5& 5& 5& 5& 5& 5& 5& 5& 5 \\
7 & 7 & 7 & 7& 7& 6& 10& 9& 8& 7 & 8& 8& 8& 8& 8& 6& 6& 6& 6& 6 \\
9 & 9 & 9 & 9& 9& 8& 7& 6& 10& 9 & 6& 6& 6& 6& 6& 7& 7& 7& 7& 7 \\
6 & 6 & 6 & 6& 6& 10& 9& 8& 7& 6 & 9& 9& 9& 9& 9& 8& 8& 8& 8& 8 \\
8 & 8 & 8 & 8& 8& 7& 6& 10& 9& 8 & 7& 7& 7& 7& 7& 9& 9& 9& 9& 9 \\
10 & 10 & 10& 10& 10& 9& 8& 7& 6& 10 & 10& 10& 10& 10& 10& 10& 10& 10& 10& 10 \\
\end{array}\right]\]
\end{example}

\begin{remark}
\textup{All of the invariants described in this paper are defined for virtual
knots and links as well as classical knots and links via the usual method
of ignoring virtual crossings.}
\end{remark}

\begin{definition}
\textup{For each of the invariants described above, the unlink with
$c$ components $U_c$ can have values which look nontrivial. Thus, to aid in
identifying nontrivial values of the invariants, for each invariant
$\phi^{\ast}_{(X,B)}(L)$ we define the corresponding \textit{normalized 
birack invariant} $\bar{\phi}_{(X,B)}^{\ast}(L)$ to be the difference between 
the invariant value on $L$ and the invariant value on the unlink of the
corresponding number of components, i.e.}
\[\bar{\phi}_{(X,B)}^{\ast}(L)=
\phi_{(X,B)}^{\ast}(L)-\phi_{(X,B)}^{\ast}(U_c).\]
\end{definition}

\begin{remark}\textup{The author's 
\texttt{python} code for computing these invariants is available 
at \texttt{www.esotericka.org}. The code represents biracks
as pairs of square matrices and knot or link diagrams as Kauffman-style
signed Gauss codes. The algorithm for finding all birack homomorphisms
uses a working list of partly filled-in image vectors, selecting a blank
entry and trying out all possible values while filling in other entries
using the homomorphism conditions. Any such working image vectors with 
contradictory entries are discarded; any remaining image vectors with 
blanks are appended to the working list, and those with no remaining
blanks are moved to an output list.}
\end{remark}

\section{\large\textbf{Questions}}\label{sec5}

In this section we collect questions for future research.

It is clear that we can combine various enhancement strategies to define
potentially stronger, if more unwieldy, multivariable enhanced invariants.
Computer experiments using \textit{python} code\footnote{available at 
\texttt{www.esotericka.org}} suggest that the writhe enhancement is the most 
effective of the three enhancements considered in this paper for knots and
links with small crossing number using small-cardinality biracks.
Does this remain true when the crossing number or size of the target
birack is increased? 

$(t,s,r)$-biracks generalize Alexander biquandles and $(t,s)$-racks. What
birack definition generalizes bilinear biquandles and symplectic
quandles? What is the generalization of Coxeter racks to the
birack case? What about other new types of biracks?

In both \cite{J} and \cite{M} it is shown that every quandle has a 
presentation as a quotient of a  multiplicative Alexander quandle structure
on the automorphism group of the quandle. Is there an analogous construction
for biracks? The straightforward generalization does not work, but
perhaps there is a non-obvious solution.

Future work will undoubtedly include generalizing the Yang-Baxter cocycle
invariants and quandle cocycle invariants studied in papers like \cite{CJKLS}
and \cite{CN} to the birack case. More generally, though, 
we would like to see additional enhancements of the various counting 
invariants defined. We would also like to see improved algorithms for
computing these invariants quickly. Functorial (as opposed to representational)
invariants of biracks would be of interest as well.

\noindent
\textsc{Department of Mathematical Sciences \\
Claremont McKenna College \\
850 Columbia Ave. \\
Claremont, CA 91711} \\
Email: \texttt{knots@esotericka.org}

\end{document}